\documentclass[10pt]{amsart}
\usepackage[margin=3.26cm]{geometry}

\usepackage[T1]{fontenc}
\usepackage[utf8]{inputenc}
\usepackage{lmodern}
\usepackage{amsmath,amssymb,amsthm,mathtools,microtype}

\usepackage{xcolor} 
\definecolor{citation}{rgb}{0.2,0.55,0.2}
\definecolor{formula}{rgb}{0.1,0.2,0.7}
\definecolor{urlc}{rgb}{0,0,0.45}
\usepackage[colorlinks=true,linkcolor=formula,citecolor=citation,urlcolor=urlc]{hyperref}

\numberwithin{equation}{section}
\newtheorem{theorem}{Theorem}[section]

\newtheorem{lemma}[theorem]{Lemma}

\theoremstyle{remark}

\newcommand{\R}{\mathbb R}
\newcommand{\HH}{\mathcal H}
\newcommand{\Ps}{P_s}
\title[A counterexample to the fractional Michael--Simon inequality]
{A counterexample to \\ the fractional Michael--Simon inequality}
\author[V. Julin]{Vesa Julin}
\address[V. Julin]{Department of Mathematics and Statistics, University of
Jyv\"askyl\"a, P.~\!O. Box 35, FI-40014 University of Jyv\"askyl\"a, Finland}
\email{vesa.julin@jyu.fi}

\author[G. Palatucci]{Giampiero Palatucci}
\address[G. Palatucci]{Department of Mathematical, Physical and Computer Sciences,
University of Parma, Parco Area delle Scienze 53/A, 43124 Parma, Italy}
\email{giampiero.palatucci@unipr.it}

\makeatletter\@namedef{subjclassname@2020}{\textup{2020} Mathematics Subject Classification}\makeatother
\subjclass[2020]{Primary 26D10; Secondary 46E35, 49Q20, 53A07}
\keywords{Michael--Simon inequality, Allard inequality, fractional perimeter,
nonlocal mean curvature}
\thanks{{\it Funding.} V.~Julin was supported by the Academy of Finland grant 347550. G.~Palatucci has been supported by the INdAM project 2026
``Parabolic equations with drift: well posedness, gradient estimates and
boundary effects'' -- CUP\_E53C25002010001.}

\begin{document}
\begin{abstract}
We show that the  
fractional Michael-Simon inequality does not hold for
general smooth hypersurfaces and for the  range of $s \in (0,\frac{n-1}{n})$ in the case $p =1$. We construct a family of smooth hypersurfaces,  whose
fractional mean curvature is nonnegative everywhere, which violates the inequality in  the limit. 
The same construction yields a counterexample for every $1 \leq p < \infty$  for the range  $s < \frac{n-1}{np}$. 
\end{abstract}
\maketitle
\vspace{-5mm}

\section{Introduction}
The classical Michael-Simon-Allard inequality states that, for a
smooth $m$-dimensional submanifold $M$ without boundary in Euclidean
space and $1\le p<m$,
\begin{equation*}\tag{MS}\label{theMSA}
 \|u\|_{L^{p^*}(M)}
 \le C\Big(\|\nabla_Mu\|_{L^p(M)}
                 +\|H u\|_{L^p(M)}\Big),
 \qquad p^*=\frac{mp}{m-p},
\end{equation*}
for every compactly supported smooth function $u$ on $M$. Here $H$
is the mean curvature of $M$. This inequality was proved independently
by Allard~\cite{All72} and Michael and Simon~\cite{MS73}, and  the optimal constant in the case $p=1$ in  codimension at most two is due to Brendle~\cite{B 2021}. 

\smallskip

We consider the nonlocal counterpart for boundaries of sets in~$\R^n$, $n\ge2$. Let~$s\in(0,1)$, for a measurable set~$E\subset\R^n$, the fractional perimeter~$\Ps(E)$ is given by
\begin{equation*}
 \Ps(E)=\int_E\int_{\R^n\setminus E}
       \frac{\,{\rm d}y\,{\rm d}x}{|x-y|^{n+s}};
\end{equation*}
see~\cite{CRS10}.
If $E$ is bounded and open with $C^2$ boundary, its
fractional mean curvature~$H_s^E(x)$ at $x\in\partial E$ is
\begin{equation}\label{def_curvatura}
 H_s^E(x)= p.v.
 \int_{\R^n}\frac{\chi_{\R^n\setminus E}(y)-\chi_E(y)}{|x-y|^{n+s}}\,{\rm d}y,
\end{equation}
where {\it p.v.} stands for principal value.
As customary, we denote by~$P(E)$ the perimeter of~$E$.
For bounded Lipschitz domains it holds $P(E)=\HH^{n-1}(\partial E)$. We also write~$\omega_n=|B_1|$.
The natural fractional analogue of \eqref{theMSA} of order~$(s,p)$ for $0<s<1$ and $1\le p<\infty$ such that $sp<n-1$ reads as, for a constant~$C=C(n,s,p)$ independent of~$E$,
\begin{equation}\label{fractional_MSA}\tag{${\textrm{MS}}^{s}_{p}$}
 \|u\|_{L^{p^*_{s}}(\partial E)}
 \le C\left(
 [u]_{W^{s,p}(\partial E)}^p
 +\int_{\partial E}|H_s^E|^p|u|^p
      \,{\rm d}\HH^{n-1}\right)^{\frac{1}{p}},
\end{equation} 
for $u \in C^\infty(\partial E)$ when $\partial E$ is bounded and $u \in C_0^\infty(\partial E)$ in general see e.g. \cite{CCC23},  where  $p^*_{s}=\frac{(n-1)p}{n-1-sp}$\, and
$$
 [u]_{W^{s,p}(\partial E)}^p
 =\int_{\partial E}\int_{\partial E}
 \frac{|u(x)-u(y)|^p}{|x-y|^{n-1+sp}}
 \,{\rm d}\HH^{n-1}(y)\,{\rm d}\HH^{n-1}(x);
$$
see~\cite{DPV12}. 
Related fractional Sobolev inequalities are known for nonlocal minimal surfaces~(\cite[Proposition~5.2]{CC19}) and, more recently, for sets with vanishing fractional mean curvature~(\cite{Tho26}). Eventually,
Cabr\'e, Cozzi, and Csat\'o proved~\eqref{fractional_MSA} when~$E$ is convex  in an even more general formulation allowing the exponent in the curvature disagreeing with that of the fractional energy, in~\cite[Theorem~1.3]{CCC23} they use convexity in a
pointwise estimate that combines the fractional Sobolev kernel with~$H_\alpha^E$. Csat\'o and Roy subsequently extended this result to a fractional Caffarelli--Kohn--Nirenberg family on convex
hypersurfaces \cite[Theorem~1.1]{CR26}. 
A weaker fractional Michael--Simon inequality on convex hypersurfaces
under a subcritical fractional Willmore energy bound was obtained by Blatt, Giacomin, Schikorra, and Scheuer; see~\cite{BGSS25}. 
The extension of \eqref{fractional_MSA} beyond convexity is explicitly left open in~\cite[p.~190, after~Formula~(1.13)]{CCC23}, \cite[Section~1, before Theorem~1.1]{CR26} and~\cite[Section~1, after Formula~(1.7)]{Tho26}.

\smallskip

Observe that by considering \eqref{fractional_MSA} with the choice $u \equiv 1$, the inequality  becomes
\begin{equation*}\tag{${\widetilde{\mathrm MS}}^{s}_p$}\label{theMSA2}
 P(E)^{\frac{n-1-sp}{n-1}}
 \le C\int_{\partial E}|H_s^E|^p\,{\rm d}\HH^{n-1}.
\end{equation*} 
In particular, if \eqref{theMSA2} would hold then  a uniform bound on  $\|H_s^E\|_{L^p(\partial E)}$ would  imply uniform bound on the perimeter.  In this note we prove that the preceding  inequality, and therefore~\eqref{fractional_MSA}, fails for
general smooth sets whenever~$0<s<(n-1)/(np)$. The failure occurs even though~$H_s^E \ge 0 $ at every point of the boundary. This gives a negative answer to the extension problem in this range. It also precludes extending the whole family of Csat\'o and Roy~\cite[Theorem~1.1]{CR26} to all smooth boundaries with constants
independent of the hypersurface: the choice $a=1$, $\alpha=s$, $\gamma=0$ and $\tau=p^*_s$ in
their notation gives indeed~\eqref{fractional_MSA}.  

\smallskip

For the construction of our counterexample, we maximize the
fractional perimeter over a compact family of radial configurations,
allowing coincident radii. Each configuration admits an open representative whose boundary is a finite union of concentric
spheres. Homogeneity implies that the outermost boundary sphere of a maximizer has the prescribed radius. Radial variations give zero fractional mean curvature on every internal sphere, while a dilation computes the curvature on the outer sphere. The use of concentric annuli is inspired by the work of Cesaroni and Novaga~(\cite{CN20}). In Theorem~2.3 there, they balance the fractional mean curvature through the nonlocal interactions between concentric spheres to construct radial self-shrinkers. We use a similar type of interaction for a different purpose: we select the radii through a different variational problem to obtain the loss of coercivity between the classical perimeter and the fractional quantities.


\medskip
\section{The counterexample} 

Unless otherwise specified, all constants in this section depend only on~$n$, $s$, and~$p$.

\begin{theorem}\label{teo_controesempio}
Let $n\ge2$, $1\leq p < \infty$ and $0<s<(n-1)/(np)$. There exists a sequence of bounded open
sets $A_N\subset B_1$, $N = 4, 6, 8,  \dots$,  with
$C^\infty$ boundaries, such that
\begin{equation}\label{eq_contro}
 H_s^{A_N} \ge 0 \quad\hbox{on} \ \partial A_N,
 \qquad
 \frac{P(A_N)^{\frac{n-1-sp}{n-1}}}
 {\displaystyle\int_{\partial A_N}|H_s^{A_N}|^p\,{\rm d}\HH^{n-1}}
 \longrightarrow+\infty.
\end{equation}
 In particular, 
 the fractional Michael--Simon--Allard inequality cannot hold for all bounded
smooth open sets.
\end{theorem}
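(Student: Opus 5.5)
We follow the strategy outlined in the introduction: maximize $\Ps$ over radial configurations of $N$ concentric spheres inside $\overline{B_1}$. For $N$ even, consider radii $0\le r_1\le r_2\le\dots\le r_N\le 1$ and the set
\[
A(r)=\bigcup_{k=1}^{N/2}\{r_{2k-1}<|x|<r_{2k}\},
\]
a union of $N/2$ concentric annuli, whose boundary is the union of the spheres $\partial B_{r_i}$. Coincident radii correspond to deleting annuli or merging them, so the set
\[
K_N=\{0\le r_1\le\dots\le r_N\le1\}
\]
is compact. The map $r\mapsto \Ps(A(r))$ is continuous on $K_N$, since $\chi_{A(r)}$ depends continuously on $r$ in $L^1$ and $\Ps$ is lower semicontinuous. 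We also need upper control; for $s<1$ this follows from the explicit radial structure, for instance via dominated convergence on the kernel integrals. Hence a maximizer $r^N$ exists.

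The next step is to derive the Euler--Lagrange information. By the scaling $\Ps(\lambda E)=\lambda^{n-s}\Ps(E)$, the maximizer satisfies $r^N_N=1$; otherwise dilating increases $\Ps$. After discarding coincident radii, the remaining internal radii are distinct, and $A_N$ has a smooth boundary made of finitely many spheres. Moving a single internal sphere $\partial B_{r_i}$ radially is an admissible two-sided variation. The first variation of $\Ps$ is $\pm\int_{\partial B_{r_i}}H_s^{A_N}$, and $H_s^{A_N}$ is constant on each sphere by radial symmetry, so $H_s^{A_N}=0$ on every internal sphere. On the outer sphere only inward variations are admissible, which gives a sign. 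More precisely, the dilation identity
\[
\frac{d}{d\lambda}\Big|_{\lambda=1}\Ps(\lambda A_N)=(n-s)\Ps(A_N)=\sum_i\int_{\partial B_{r_i}} H_s^{A_N}\,x\cdot\nu
\]
together with the vanishing on internal spheres yields
\[
H_s^{A_N}\big|_{\partial B_1}=\frac{(n-s)\Ps(A_N)}{n\omega_n}\ge0 .
\]
Therefore
\[
\int_{\partial A_N}|H_s^{A_N}|^p\,{\rm d}\HH^{n-1}=n\omega_n\Big(\frac{(n-s)\Ps(A_N)}{n\omega_n}\Big)^p .
\]

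We then estimate both sides of the ratio in \eqref{eq_contro}. First, $\Ps(A_N)\le C$ uniformly. Indeed, I would bound $\Ps$ of any union of annuli in $B_1$ directly: $\Ps(E)\le \int_{E}\int_{E^c}|x-y|^{-n-s}$, split by distance to the nearest boundary sphere. We need a bound independent of the number of annuli. A cleaner route is the interpolation $\Ps(E)\le C\,|E|^{?}P(E)^{s}$, which does not give a uniform bound, so instead I would rely on the following. The $H_s$ of a maximizer is then bounded by $\Ps$ of the maximizer. We need a \emph{lower} bound on $P(A_N)$ growing like a power of $N$. Comparing with the competitor made of $N/2$ annuli of equal width $1/N$ gives $\Ps(A_N)\ge c N^{s}$. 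This shows that the maximizer has many effective interfaces: if $A_N$ had perimeter $P$, then $\Ps(A_N)\le C P^{s}$ by the standard interpolation $\Ps(E)\le C P(E)^s|E|^{1-s}$ for $E\subset B_1$. Hence $P(A_N)\ge c\,\Ps(A_N)^{1/s}$. Combining,
\[
\frac{P(A_N)^{\frac{n-1-sp}{n-1}}}{\int|H_s|^p}\ge c\,\Ps(A_N)^{\frac{n-1-sp}{s(n-1)}-p}.
\]
The exponent is positive iff $n-1-sp>sp(n-1)$, i.e.\ $s<\frac{n-1}{np}$, which is exactly the hypothesis. Since $\Ps(A_N)\ge cN^s\to\infty$, the ratio diverges.

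The main obstacles are the two quantitative bounds. The first is the lower bound $\Ps(A_N)\ge cN^s$ for the equally spaced competitor, which requires the interactions between neighbouring thin annuli to dominate without cancellation; I would obtain it by a local computation near each sphere, each contributing roughly $N^{s-1}$ times area, with about $N$ spheres. The second is the interpolation inequality $\Ps\le CP^s$ for sets in $B_1$, which follows from the standard estimate $\int|u(x+h)-u(x)|\,dx\le |h|P(E)$ combined with $\le 2|E|$, integrated against $|h|^{-n-s}$. The justification of continuity and of the first variation formula on the spheres is routine.
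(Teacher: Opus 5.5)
Your argument is essentially the paper's. You maximize $\Ps$ over configurations of concentric annuli, get $H_s^{A_N}=0$ on the internal spheres from two-sided radial variations, and identify the constant on the outer sphere through the dilation identity $n\omega_n H_s^{A_N}|_{\partial B_1}=(n-s)\Ps(A_N)$. You then combine the lower bound $\Ps(A_N)\ge cN^s$ from an equally spaced competitor with the interpolation inequality $\Ps(E)\le C|E|^{1-s}P(E)^s$. The only structural difference is a mild streamlining of the last step. You write the ratio as $\ge c\,\Ps(A_N)^{\frac{n-1-sp}{s(n-1)}-p}$, so you only need $\Ps(A_N)\to\infty$. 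The paper instead first proves the two-sided bounds $\Ps(A_N)\asymp N^s$ and $P(A_N)\asymp N$, which uses the extra upper bound $\Ps\le C_*N^s$, and then compares powers of $N$. Both routes give the rate $N^{1-\frac{nsp}{n-1}}$.

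A few points in your write-up need repair, none of them fatal.

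First, the sentence asserting $\Ps(A_N)\le C$ uniformly is false, since it contradicts $\Ps(A_N)\ge cN^s$. Delete it; your final argument does not use it.

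Second, for existence of a maximizer, lower semicontinuity is useless, and dominated convergence does not apply as stated because $|x-y|^{-n-s}$ is not integrable near the diagonal. The clean argument, as in the paper, applies $[u]_{W^{s,1}}\le C\|u\|_{L^1}^{1-s}|Du|(\R^n)^s$ to $u=\chi_{A(\mathbf r)}-\chi_{A(\mathbf t)}$. Its total variation is at most $2n\omega_nN$, so $|\Ps(A(\mathbf r))-\Ps(A(\mathbf t))|\lesssim\|\chi_{A(\mathbf r)}-\chi_{A(\mathbf t)}\|_{L^1}^{1-s}$.

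Third, $r_N=1$ is not what you need. If $r_{N-1}=r_N=1$, the last annulus is empty and the outermost boundary sphere $\partial B_{\rho_m}$ may have $\rho_m<1$. Your formula for $H_s^{A_N}$ on $\partial B_1$ presupposes $\partial B_1\subset\partial A_N$. Run the dilation on $\rho_m$ instead, with truncated radii $\min\{\lambda r_i,1\}$ and $1<\lambda<1/\rho_m$. This reproduces $\lambda A_N$ up to null sets and is admissible.

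Finally, there is no cancellation to worry about in the competitor bound. $\Ps$ is the integral of a positive kernel over $A\times(\R^n\setminus A)$. Restricting to $x$ and $y$ in adjacent shells of width comparable to $1/N$, at mutual distance $\lesssim 1/N$, directly gives a contribution $\gtrsim N^{s-1}$ times the area of each sphere.
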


In order to prove the result above, we need some preliminary results.

\begin{lemma}\label{lem_interpol}
For every set $A \subset \R^n$ of finite measure and finite perimeter
one has
\begin{equation}\label{eq_interpol}
 \Ps(A)\le C|A|^{1-s}P(A)^s.
\end{equation}
If $\chi_{A_j}\to\chi_A$ in $L^1(\R^n)$ and
$\sup_jP(A_j)<\infty$, then $\Ps(A_j)\to\Ps(A)$.
\end{lemma}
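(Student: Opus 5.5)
The plan is to write the fractional perimeter through translations and interpolate between two elementary bounds on $\|\chi_A-\chi_A(\cdot+h)\|_{L^1}$. By symmetry of the kernel,
\[
 \Ps(A)=\frac12\int_{\R^n}\int_{\R^n}\frac{|\chi_A(x)-\chi_A(y)|}{|x-y|^{n+s}}\,{\rm d}y\,{\rm d}x
 =\frac12\int_{\R^n}\frac{\|\chi_A-\chi_A(\cdot+h)\|_{L^1}}{|h|^{n+s}}\,{\rm d}h .
\]
The two bounds are $\|\chi_A-\chi_A(\cdot+h)\|_{L^1}\le 2|A|$, which is trivial, and $\|\chi_A-\chi_A(\cdot+h)\|_{L^1}\le |h|\,P(A)$. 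The second is the standard BV estimate: prove it for smooth $u$ via $u(x+h)-u(x)=\int_0^1\nabla u(x+th)\cdot h\,{\rm d}t$, then pass to $\chi_A$ by mollification and lower semicontinuity/approximation of total variation.

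Split the $h$-integral at a radius $R>0$. The region $|h|<R$ contributes at most $C P(A)R^{1-s}/(1-s)$, and the region $|h|\ge R$ contributes at most $C|A|R^{-s}/s$. Choosing $R=|A|/P(A)$ gives \eqref{eq_interpol}. The case $P(A)=0$ is trivial, since then $|A|=0$ or $A=\R^n$, and the latter is excluded by finite measure. The constant depends only on $n$ and $s$.

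For the convergence statement, apply the same decomposition to differences. Set $u_j=\chi_{A_j}$, $u=\chi_A$, and $f_j(h)=\|u_j-u_j(\cdot+h)\|_{L^1}$, $f(h)=\|u-u(\cdot+h)\|_{L^1}$. Then
\[
 |f_j(h)-f(h)|\le 2\|u_j-u\|_{L^1}.
\]
Also, by lower semicontinuity, $P(A)\le\liminf_j P(A_j)\le M:=\sup_j P(A_j)$, so $f_j(h),f(h)\le M|h|$.

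Hence, for fixed $R$,
\[
 |\Ps(A_j)-\Ps(A)|\le \frac12\int_{|h|<R}\frac{2M|h|}{|h|^{n+s}}\,{\rm d}h+\frac12\int_{|h|\ge R}\frac{2\|u_j-u\|_{L^1}}{|h|^{n+s}}\,{\rm d}h
 \le C M R^{1-s}+C R^{-s}\|u_j-u\|_{L^1}.
\]
Letting $j\to\infty$ and then $R\to0$ gives $\Ps(A_j)\to\Ps(A)$.

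The main obstacle is only the rigorous justification of the translation estimate $\|\chi_A-\chi_A(\cdot+h)\|_{L^1}\le|h|P(A)$ for general finite-perimeter sets. I would handle it by approximating $\chi_A$ by mollifications $\chi_A*\rho_\varepsilon$, which satisfy $\|\nabla(\chi_A*\rho_\varepsilon)\|_{L^1}\le P(A)$, and passing to the limit in $L^1$. Everything else is direct integration of the kernel in polar coordinates.
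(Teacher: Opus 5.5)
Your proof is correct and takes essentially the same route as the paper. The paper cites the interpolation inequality $[u]_{W^{s,1}}\le C\|u\|_{L^1}^{1-s}|Du|(\R^n)^s$ from Brasco--Lindgren--Parini and applies it to $\chi_A$ and to $\chi_{A_j}-\chi_A$; you instead prove that inequality directly via the translation bound $\min\{2|A|,|h|P(A)\}$. Your estimate $|f_j(h)-f(h)|\le\min\{2\|u_j-u\|_{L^1},2M|h|\}$ is that same interpolation applied to the difference, and optimizing in $R$ rather than sending $R\to0$ recovers the paper's explicit rate $\|\chi_{A_j}-\chi_A\|_{L^1}^{1-s}$.
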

\begin{proof}
The inequality~\eqref{eq_interpol}  is well-known, see e.g.  \cite[Corollary~4.3]{BLP14} or \cite[Formula~(1.4)]{PS17}. Indeed,   \cite[Proposition~4.2]{BLP14} gives the following general interpolation inequality,
\begin{equation}\label{eq_BV}
 [u]_{W^{s,1}}
 \le C\|u\|_{L^1}^{1-s}|Du|(\R^n)^s,
\end{equation}
for any $u\in BV(\R^n)\cap L^1(\R^n)$. Inequality~\eqref{eq_interpol} then  follows by applying~\eqref{eq_BV} to~$u = \chi_A$ and using  $[\chi_A]_{W^{s,1}}=2\Ps(A)$.

For the last assertion, lower semicontinuity of total variation gives
$P(A)<\infty$. Applying \eqref{eq_BV} to~$u_j=\chi_{A_j}-\chi_A$, whose total variations are uniformly bounded,
we obtain
\begin{equation*}
 2|\Ps(A_j)-\Ps(A)| \le  [\chi_{A_j}-\chi_A]_{W^{s,1}} \lesssim \|\chi_{A_j}-\chi_A\|^{1-s}_{L^1}  \longrightarrow 0. \qedhere
\end{equation*}
\end{proof}

\medskip
Fix $R\in (0,1]$ and an even integer $N\ge4$. Motivated by the concentric argument of Cesaroni and
Novaga in~\cite[Theorem~2.3]{CN20}, we define
$$
\mathbf J^{(N)}_R=\Big\{\mathbf r=(r_1,\ldots,r_N):0\le r_1\le\cdots\le r_N=R\Big\},
$$ 
and to any $\mathbf r\in{\mathbf J^{(N)}_R}$ we associate the set~$ A(\mathbf r)$ defined by
\begin{equation}\label{eq_annuli}
 A(\mathbf r)=\bigcup_{j=1}^{N/2}
 \left(B_{r_{2j}}\setminus\overline{B}_{r_{2j-1}}\right).
\end{equation}
Degenerate configurations are regarded as equivalence classes modulo
negligible  sets. Almost everywhere,
\begin{equation}\label{ball-rapp}
 \chi_{A(\mathbf r)}=\sum_{i=1}^N(-1)^i\chi_{B_{r_i}};
 \qquad
\text{and} \qquad  P(A(\mathbf r)) \le \sum_{i=1}^N P(B_{r_i})
   \le{n\omega_n}NR^{n-1}.
\end{equation}

\begin{lemma}\label{lemma_max}
For a fixed $N$, the function $\mathbf r\mapsto\Ps(A(\mathbf r))$ attains its
maximum on $\mathbf J_R^{(N)}$.
Every maximizer admits an open representative $A_{N,R}$ with
$C^\infty$ boundary consisting of at most $N$ concentric spheres,
including~$\partial B_R$, and satisfying
\begin{equation}\label{eq_zero}
 H_s^{A_{N,R}}=0 \quad {\text{on}}\ \partial A_{N,R}\cap B_R.
\end{equation}
\end{lemma}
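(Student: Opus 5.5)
Existence of a maximizer follows from compactness and continuity, using Lemma~\ref{lem_interpol}. The set $\mathbf J^{(N)}_R$ is a compact subset of $\R^N$. If $\mathbf r^k\to\mathbf r$, then \eqref{ball-rapp} gives
\[
\|\chi_{A(\mathbf r^k)}-\chi_{A(\mathbf r)}\|_{L^1}\le\sum_i\omega_n|(r^k_i)^n-r_i^n|\to0 .
\]
The perimeters $P(A(\mathbf r^k))$ are uniformly bounded by $n\omega_nNR^{n-1}$. The last assertion of Lemma~\ref{lem_interpol} therefore yields $\Ps(A(\mathbf r^k))\to\Ps(A(\mathbf r))$. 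Hence $\mathbf r\mapsto\Ps(A(\mathbf r))$ is continuous on a compact set and attains its maximum at some $\mathbf r^*$.

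\emph{Representative.} We merge coincident radii, since equal consecutive $r_i$ cancel in \eqref{ball-rapp}. We also drop $r_1=0$ if it occurs. This gives a strictly increasing list $0<\rho_1<\dots<\rho_M$ with $M\le N$ and an alternating structure. The corresponding open set $A_{N,R}$ is a union of open annuli, possibly a ball, bounded by the spheres $\partial B_{\rho_k}$, so its boundary is $C^\infty$.

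We must check that $\partial B_R$ survives the cancellation. If it did not, the effective configuration would lie in some $B_{R'}$ with $R'<R$. Dilating by $\lambda=R/R'>1$ gives a new admissible configuration in $\mathbf J_R^{(N)}$. By homogeneity, $\Ps(\lambda A)=\lambda^{n-s}\Ps(A)$. This exceeds $\Ps(A)$ strictly, provided $\Ps(A)>0$. Positivity holds because the maximum is at least $\Ps(B_R\setminus\overline B_{R/2})>0$. One should also confirm that the dilated configuration is admissible; this amounts to padding with coincident radii equal to $R$ to keep the last entry $R$ and the parity even. The parity bookkeeping is the fiddly part of this step.

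\emph{Euler--Lagrange equation.} Fix an interior boundary sphere $\partial B_{\rho_k}$ with $\rho_k<R$. Moving $\rho_k$ to $\rho_k+t$ for $|t|$ small keeps the configuration admissible, since the neighbours are at positive distance and the outer radius stays $R$. It is convenient to realize this perturbation as a smooth compactly supported radial diffeomorphism moving only that sphere. The first variation of the fractional perimeter along a smooth vector field $X$ is
\[
\frac{d}{dt}\Big|_{t=0}\Ps(\Phi_t(A))=\int_{\partial A}H_s^A\,X\cdot\nu\,d\HH^{n-1}.
\]
This is standard for $C^2$ sets; one could cite it or derive it directly for this radial case by differentiating $t\mapsto\Ps(A)$ as a function of radii. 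For the radial field, $X\cdot\nu$ is constant ($\pm1$) on $\partial B_{\rho_k}$ and vanishes on the other spheres. Moreover, $H_s^A$ is constant on each sphere by rotational symmetry. Since $t=0$ is an interior maximum, the derivative vanishes, so $H_s^{A}\,n\omega_n\rho_k^{n-1}=0$. This gives \eqref{eq_zero}.

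The main obstacle is making the differentiability of $\rho\mapsto\Ps(A)$ rigorous. One way is to expand $\Ps$ via \eqref{ball-rapp} into the pairwise interactions
\[
\int_{B_{\rho_i}}\int_{\R^n\setminus B_{\rho_j}}|x-y|^{-n-s}\,dy\,dx .
\]
These are smooth in the radii when the spheres are separated. The self-interaction terms reduce to $\Ps(B_\rho)=c\rho^{n-s}$.
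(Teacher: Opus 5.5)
Your proposal is correct and follows the paper's proof essentially step for step: continuity on the compact set $\mathbf J_R^{(N)}$ via Lemma~\ref{lem_interpol}, a dilation combined with $\Ps(\lambda A)=\lambda^{n-s}\Ps(A)>\Ps(A)$ to show that the outermost sphere is $\partial B_R$, and the first-variation formula for a radial field supported near a single internal sphere to get $H_s^{A_{N,R}}=0$ there.

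The step you left as ``fiddly'' is handled in the paper without any parity bookkeeping. Dilate the original (unmerged) vector and clip, $\widetilde r_i=\min\{\lambda r_i,R\}$. Since $\chi_{B_{\min\{a,b\}}}=\chi_{B_a}\chi_{B_b}$, this gives $A(\widetilde{\mathbf r})=(\lambda A)\cap B_R=\lambda A$ a.e. If you instead dilate the merged list, appending copies of $R$ does not repair the parity when the number $M$ of spheres is odd (central ball); you need a leading zero there.
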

\begin{proof}
By \eqref{ball-rapp}, for $\mathbf r,\mathbf t\in\mathbf J_R^{(N)}$,
$$
 \|\chi_{A(\mathbf r)}-\chi_{A(\mathbf t)}\|_{L^1}  \le \omega_n\sum_{i=1}^N|r_i^n-t_i^n|.
$$
The perimeters are uniformly bounded by \eqref{ball-rapp}.
Lemma~\ref{lem_interpol} therefore gives the continuity of~$\mathbf r\mapsto\Ps(A(\mathbf r))$.
Since~$\mathbf J_R^{(N)}$ is compact, a maximum is attained.

Let $\mathbf r$ be a maximizing vector and set~$\displaystyle A=A_{N,R}:=\operatorname{int}\bigl(\overline{A(\mathbf r)}\bigr)$.  
This representative coincides with~$A(\mathbf r)$ almost everywhere.

Notice that, since $B_R$ belongs to the family modulo negligible sets, it holds
$$
 \Ps(A)\ge\Ps(B_R)>0.
$$
The radial characteristic function 
has finitely many jumps, so
$$
 \partial A=\bigcup_{j=1}^m\partial B_{\rho_j},
 \qquad 0<\rho_1<\cdots<\rho_m\le R,
 \qquad m\le N.
$$
In particular, $\partial A$ is $C^\infty$.

We claim that $\rho_m=R$. Otherwise, choose
$1<\lambda<R/\rho_m$ and set
$$
 \widetilde r_i=\min\big\{\lambda r_i, R \big\},  \qquad 1\le i\le N.
$$
Then $\widetilde{\mathbf r}\in\mathbf J_R^{(N)}$ and, almost
everywhere,
$$
 A(\widetilde{\mathbf r})=(\lambda A)\cap B_R=\lambda A.
$$
Consequently,
$$
 \Ps(A(\widetilde{\mathbf r})) =  \Ps(\lambda A)=\lambda^{2n-(n+s)}\Ps(A) =\lambda^{n-s}\Ps(A)>\Ps(A),
$$
which is a contradiction in view of the maximality of~$A$.

\medskip
We are left to prove the equality \eqref{eq_zero}. To that aim we fix one of the internal spheres~$\partial B_{\rho_j}$,  choose a function $\psi \in C^1(\partial A)$ such that $\psi = 1$ on $\partial B_{\rho_j}$ and  $\psi = 0$ on $\partial A \setminus \partial B_{\rho_j}$. 
We construct a radial vector field $X \in C_0^1(U; \R^n)$, where $U$ is a neighborhood of $\partial B_{\rho_j}$ and which does not intersect the set $\partial A \setminus \partial B_{\rho_j}$,   such that $X \cdot \nu_A = \psi$ on $\partial A$. Consider the associated flow of diffeomorphisms  $\Phi_t$, i.e.,  $\frac{\partial}{\partial t} \Phi_t = X(\Phi_t)$ and $\Phi_0(x) = x$ for all $x \in \R^n$.  When $|t|$ is small the sets $\Phi_t(A)$ are admissible and  the first-variation Formula~(6.12) in~\cite[Theorem~6.1]{FFMMM15} implies 
\[
 0 =\left.\frac{{\rm d}}{{\rm d}t}
       \Ps(\Phi_t(A))\right|_{t=0} =\int_{\partial A}H_s^A(x)\,\psi(x) \,
       {\rm d}\HH^{n-1} = \int_{\partial B_{\rho_j}}H_s^A(x) \,
       {\rm d}\HH^{n-1} .
\]
By the radial symmetry $H_s^A$ is constant on~$\partial B_{\rho_j}$ and therefore  we deduce that $H_s^A = 0$ on~$\partial B_{\rho_j}$. This proves the claim. 
\end{proof}
\smallskip

We are now in the position to complete the construction of our counterexample.
\begin{proof}[Proof of Theorem~\ref{teo_controesempio}]
Let $N\ge4$ be even and let $A_{N,R}$ be the open representative
of a maximizer given by Lemma~\ref{lemma_max}.

\smallskip
\noindent\textit{Step 1: Estimates on the perimeters.}
We first show that there are constants~$c_*,C_*>0$, depending only on~$n$ and~$s$, such
that the family~\eqref{eq_annuli} contains a competitor~$A^*$ satisfying
\begin{equation*} 
 \Ps(A^*)\ge c_*N^sR^{n-s},
\end{equation*}
and every member $A$ of the family satisfies
\begin{equation*} 
 \Ps(A)\le C_*N^sR^{n-s},
\end{equation*}
so that, as a consequence, any maximizer $A_{N,R}$ in
Lemma~\ref{lemma_max} satisfies
\begin{equation}\label{eq_en_growth}
 c_*N^sR^{n-s} \le  \Ps(A_{N,R})\le C_*N^sR^{n-s}.
 \end{equation}

Set $\displaystyle h=\frac{1}{2}\frac{R}{N-1}$\, and choose
$$
\displaystyle r_i^*=\frac{R}{2}+(i-1)h,\qquad 1\le i\le N.
$$
Let
$$
 A^*=\bigcup_{j=1}^{N/2}
 \left(B_{r^*_{2j}}\setminus\overline{B}_{r^*_{2j-1}}\right).
$$
For $1\le j\le N/2-1$, set $q_j=r_{2j-1}^*$ and
$$
 X_j=B_{q_j+3h/4}\setminus\overline{B}_{q_j+h/4}
 \subset A^*.
$$
For $x=r\theta\in X_j$, with $\theta\in\mathbb S^{n-1} = \partial B_1$, define
$$
 \mathcal C(\theta)=
 \left\{\varphi\in\mathbb S^{n-1}:
 |\varphi-\theta|<\frac{h}{10R}\right\},
 \qquad
 \HH^{n-1}(\mathcal C(\theta))
 \ge c\left(\frac hR\right)^{n-1},
$$
and
$$
 Y_j(x)=
 \left\{t\varphi:
 q_j+\frac{5h}{4}<t<q_j+\frac{7h}{4},\
 \varphi\in\mathcal C(\theta)\right\}
 \subset\R^n\setminus A^*.
$$
Since all these radii lie between $R/2$ and $R$, polar coordinates give
\begin{align*}
 |X_j|
 &=n\omega_n\int_{q_j+h/4}^{q_j+3h/4}r^{n-1}\,{\rm d}r
 \ge cR^{n-1}h,\\
 |Y_j(x)|
 &=\HH^{n-1}(\mathcal C(\theta))
   \int_{q_j+5h/4}^{q_j+7h/4}t^{n-1}\,{\rm d}t
 \ge ch^n.
\end{align*}
Moreover, for $x=r\theta\in X_j$ and $y=t\varphi\in Y_j(x)$,
$$
 |x-y|
 = |(r-t)\theta+t(\theta-\varphi)|
 \le |r-t|+t|\theta-\varphi|
 \le\frac{3h}{2}+\frac h{10}\le2h.
$$
The sets $X_j$ are pairwise disjoint and $N/2-1\ge N/4$. Hence
\begin{align*}
 \Ps(A^*)
 &\ge\sum_{j=1}^{N/2-1}
 \int_{X_j}\int_{Y_j(x)}
 \frac{\,{\rm d}y\,{\rm d}x}{|x-y|^{n+s}}\\*[0.5ex]
 &\ge\frac{1}{(2h)^{n+s}}
 \sum_{j=1}^{N/2-1}\int_{X_j}|Y_j(x)|\,{\rm d}x\\*[1ex]
 &\ge cNR^{n-1}h^{1-s} \ \ge\  c_*N^sR^{n-s},
\end{align*}
as desired.

For  the upper bound, notice that, for any member $A$ of the family, \eqref{eq_interpol} and~\eqref{ball-rapp} give 
\begin{equation}\label{ps-r-est-unif}
 \Ps(A)
  \le C|A|^{1-s}P(A)^s 
  \le C(\omega_n R^n)^{1-s}
               ({n\omega_n}N R^{n-1})^s
 \le C_*N^s R^{n-s}.
 \end{equation}
Maximality  then yields the estimates in~\eqref{eq_en_growth}. 

Finally, such a maximizer~$A_{N,R}$ satisfies
 \begin{equation}  \label{eq_per-growth}
 cNR^{n-1} \le P(A_{N,R})\le{n\omega_n}NR^{n-1}.
\end{equation}
Indeed, by construction,~$c_*N^s R^{n-s}
 \le C R^{n(1-s)}P(A_{N,R})^s$, and thus taking the $s^{\textrm{th}}$-root gives the lower bound in
\eqref{eq_per-growth}; whereas the upper bound is already contained in~\eqref{ball-rapp}.

\smallskip
\noindent\textit{Step 2: Estimates on the nonlocal curvature.}
Write $A=A_{N,R}$. By radial symmetry and \eqref{eq_zero},
there is a constant $C_{N,R}$ such that
$$
 H_s^A(x)=
 \begin{cases}
  0, & x\in\partial A\setminus\partial B_R,\\*[0.5ex]
  C_{N,R}, & x\in\partial B_R.
 \end{cases}
$$

We choose the velocity vector field $X \in C_0^1(\R^n; \R^n)$ such that $X(x) = x$ in $B_{2R}$. Then for $x \in A$ and for $|t|$ sufficiently small  
it holds $\Phi_t(x)= e^t x$. Therefore, it holds 
\begin{equation}\label{flow2} 
\Ps(\Phi_t(A)) =  (e^t)^{n-s}\Ps(A) =e^{(n-s)t}\Ps(A).
\end{equation}
Since $X(x)\cdot \nu_A=R$ on $\partial B_R$ and $H_s^A = 0$ on $\partial A \setminus \partial B_R$, the first-variation
Formula~(6.12) in~\cite[Theorem~6.1]{FFMMM15} and~\eqref{flow2} imply
\begin{align}
 n\omega_n R^n C_{N,R}
 &=\int_{\partial A}H_s^A(x)\,X(x)\cdot \nu_A(x)\,
       {\rm d}\HH^{n-1}(x)\notag\\*[1ex]
 &=\left.\frac{{\rm d}}{{\rm d}t}
       \Ps(\Phi_t(A))\right|_{t=0}\notag\\*[1ex]
 &=(n-s)\Ps(A)>0.
 \label{eq_dilation}
\end{align}
Thus $C_{N,R}>0$ and $H_s^A\ge0$ on $\partial A$. Moreover,
for every $1\le p<\infty$ we have by the above 
\begin{align}
 \int_{\partial A}|H_s^A|^p\,{\rm d}\HH^{n-1}
 &=n\omega_n R^{n-1}C_{N,R}^p\notag\\*[0.5ex]
 &=\frac{(n-s)^p} {(n\omega_n)^{p-1}R^{n(p-1)+1}}\Ps(A)^p.
 \label{eq_compli}
\end{align}

\smallskip
\noindent\textit{Step 3: Completion of the proof.}
For each even $N\ge4$, take $A_N=A_{N,1}$ from
Lemma~\ref{lemma_max}. Then $A_N$ is open, is contained in $B_1$,
and has a $C^\infty$ boundary consisting of at most $N$ concentric
spheres. Steps~1 and~2 yield
\begin{align*}
 cN&\le P(A_N)\le CN,\\
 cN^{sp}&\le
 \int_{\partial A_N}|H_s^{A_N}|^p\,{\rm d}\HH^{n-1}
 \le CN^{sp},
\end{align*}
where the constants may also depend on $p$.
Then, up to relabeling the constants,
\begin{align*}
 \frac{P(A_N)^{\frac{n-1-sp}{n-1}}}
 {\displaystyle\int_{\partial A_N}|H_s^{A_N}|^p\,
       {\rm d}\HH^{n-1}}
 &\ge c\frac{N^{\frac{n-1-sp}{n-1}}}{N^{sp}}\ = \ cN^{1-\frac{nsp}{n-1}}\longrightarrow+\infty,
\end{align*}
for $0<s<(n-1)/(np)$, as claimed.
\end{proof}

%

\bigskip

\begin{thebibliography}{99}

\bibitem{All72}
W. K. Allard:
{On the first variation of a varifold}.
{\it Ann.~of~Math.} {\bf 95} (1972), 417--491.
\smallskip

\bibitem{BGSS25}
S. Blatt, G. Giacomin, A. Schikorra, J. Scheuer:
{A fractional Willmore-type energy functional-subcritical observations}.
In {\it MATRIX Book Series} {\bf 6}, Springer, Cham, 5--27, 2025.
\smallskip

\bibitem{BLP14}
L. Brasco, E. Lindgren, E. Parini: 
The fractional Cheeger problem. 
{\it Interfaces Free Bound.} {\bf 16} (2014), no. 3, 419--458.
\smallskip

\bibitem{B 2021}
S. Brendle:
{The isoperimetric inequality for a minimal submanifold in Euclidean space.} {\it J. Amer. Math. Soc.} {\bf 34} (2021),  595--603.

\smallskip
\bibitem{CC19}
X. Cabr\'e, M. Cozzi:
{A gradient estimate for nonlocal minimal graphs}.
{\it Duke Math. J.} {\bf 168} (2019), 775--848.
\smallskip

\bibitem{CCC23}
X. Cabr\'e, M. Cozzi,  G. Csat\'o:
{A fractional Michael--Simon Sobolev inequality on convex hypersurfaces}.
{\it Ann. Inst. H. Poincar\'e Anal. Non Lin\'eaire} {\bf 40} (2023), 185--214.
\smallskip


\bibitem{CRS10}
L. Caffarelli, J.-M. Roquejoffre,  O. Savin:
{Nonlocal minimal surfaces}.
{\it Comm. Pure Appl. Math.} {\bf 63} (2010), 1111--1144.
\smallskip

\bibitem{CN20}
A. Cesaroni, M. Novaga:
{Symmetric self-shrinkers for the fractional mean curvature flow}.
{\it J. Geom. Anal.}~{\bf 30}~(2020), 3698--3715.
\smallskip

\bibitem{CR26}
G. Csat\'o, P. Roy:
{A fractional Hardy-Sobolev inequality of Michael--Simon type on convex
hypersurfaces}.
{\it Math. Ann.} {\bf 395} (2026), Paper No.~14.
\smallskip

\bibitem{DPV12}
E. Di Nezza, G. Palatucci, E. Valdinoci:
{Hitchhiker's guide to the fractional Sobolev spaces}.
{\it Bull. Sci. math.}~{\bf 136} (2012), 521--573.
\smallskip

\bibitem{FFMMM15}
A. Figalli, N. Fusco, F. Maggi, V. Millot,  M. Morini:
{Isoperimetry and stability properties of balls with respect to nonlocal energies}.
{\it Comm. Math. Phys.} {\bf 336} (2015), 441--507.
\smallskip


\bibitem{MS73}
J.~\!H. Michael, L. M. Simon:
{Sobolev and mean-value inequalities on generalized submanifolds
of $\R^n$}.
{\it Comm. Pure Appl. Math.} {\bf 26} (1973), 361--379.
\smallskip

\bibitem{PS17}
A.~\!C. Ponce, D. Spector:
{A note on the fractional perimeter and interpolation}.
{\it C. R. Math. Acad. Sci. Paris} {\bf 355} (2017), 960--965.
\smallskip

\bibitem{Tho26}
J. Thompson:
{Density estimates and the fractional Sobolev inequality for
sets of zero $s$-mean curvature}.
{\it Calc. Var. Partial Differential Equations} {\bf 65} (2026), Article no.~2.

\end{thebibliography}
\end{document}